\documentclass{amsart}
\usepackage{setspace}
\usepackage{a4}
\usepackage{amssymb,amsmath,amsthm,latexsym}
\usepackage{amsfonts}
\usepackage{amsfonts}
\usepackage{graphicx}
\usepackage{textcomp}
\usepackage{cite}
\usepackage{enumerate}
\usepackage[mathscr]{euscript}
\usepackage{mathtools}
\newtheorem{theorem}{Theorem}[section]

\newtheorem{conjecture}[theorem]{Conjecture}

\newtheorem{definition}[theorem]{Definition}

\setlength{\parindent}{0pt} \setlength{\evensidemargin}{0.3cm}
\setlength{\oddsidemargin}{0.3cm} \setlength{\topmargin}{-2cm}
\textwidth 16cm \textheight 23cm
\onehalfspacing
\title{This is the title}
\usepackage{amssymb}
\usepackage{amssymb}
\usepackage{amssymb}
\usepackage{amssymb}
\usepackage{amsmath}
\usepackage{tikz}
\usepackage{hyperref}
\usepackage{enumerate}
\usepackage{mathtools}
\usepackage{amsmath}
\usepackage{tikz}
\usepackage{amssymb}
\usepackage{amsmath}
\usepackage{tikz}
\usepackage{hyperref}
\raggedbottom


\usepackage{fancyhdr}
\pagestyle{fancy}
\fancyhead[LO]{C*-ALGEBRAIC GAUSS-LUCAS THEOREM AND C*-ALGEBRAIC SENDOV'S CONJECTURE}
\fancyhead[RE]{K. MAHESH KRISHNA}

\begin{document}
	\begin{center}
	{\bf{C*-ALGEBRAIC GAUSS-LUCAS THEOREM AND C*-ALGEBRAIC SENDOV'S CONJECTURE}}\\
	{\bf{K. MAHESH KRISHNA}}\\
	Post Doctoral Fellow\\
	Statistics and Mathematics Unit\\
	Indian Statistical Institute, Bangalore Centre\\
	Karnataka 560 059 India\\
	Email: kmaheshak@gmail.com \\
	\today
\end{center}

\hrule
\vspace{0.5cm}

\textbf{Abstract}:  Using a result of Robertson \textit{[Proc. Edinburgh Math. Soc. (2), 1976]}, we introduce a  notion of differentiation of maps on certain classes of unital commutative  C*-algebras. We then derive C*-algebraic Gauss-Lucas theorem and formulate C*-algebraic Sendov's conjecture. We verify C*-algebraic Sendov's conjecture for polynomials of degree 2.  

\textbf{Keywords}: Sendov's conjecture, C*-algebra.

\textbf{Mathematics Subject Classification (2020)}: 30C15, 46L05.
\tableofcontents
\section{Introduction}
Let $a$ be a complex number, $r>0$ be a real number and $\overline{\mathbb{D}(a, r)}$ be the closed disc in $\mathbb{C}$ centered at $a$ of radius $r$. Let $\mathbb{C}[z]$ be the set of all polynomials over $\mathbb{C}$. In 1958, Sendov made the following conjecture which became known as Sendov's conjecture.
 \begin{conjecture}\label{SENDOV} \cite{HAYMAN, MARDEN, RAHMANSCHMEISSERBOOK}	\textbf{(Sendov's conjecture)
 	Let $n \in \mathbb{N}\setminus \{1\}$ and $p(z)=(z-a_1)(z-a_2)\cdots (z-a_n)\in \mathbb{C}[z]$ be such that $a_1, a_2, \dots,  a_n \in \overline{\mathbb{D}(0, 1)}$. Then for each $a_j$, $1\leq j\leq n$, there exists  a zero $b$ of $p'$ such that $b \in  \overline{\mathbb{D}(a_j, 1)}$.} 
 \end{conjecture}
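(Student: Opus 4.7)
This is Sendov's 1958 conjecture, a well-known open problem in geometric function theory. It has been verified for small $n$ (cumulatively for $n \leq 8$ by Rubinstein, Saff--Twomey, Schmeisser, Borcea, Brown, and others), in the boundary case $|a_j| = 1$ (Rubinstein, 1968), and for $n$ exceeding a certain explicit threshold (Tao, 2022). I therefore sketch a plausible line of attack rather than claim a complete proof.

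The plan is to fix $a_j$, rotate so that $a_j \geq 0$, and exploit the critical-point identity
\[
\frac{1}{a_j - b} \;=\; \sum_{k \neq j} \frac{1}{b - a_k}
\]
obtained from $p'/p = \sum_k 1/(z - a_k)$ at any critical point $b$ with $p(b) \neq 0$. The goal is to argue by contradiction: if every critical point $b$ lay outside $\overline{\mathbb{D}(a_j, 1)}$, then the left-hand side would be bounded by $1$ in modulus, while the right-hand side, constrained by $|a_k| \leq 1$, would be forced to a value incompatible with this bound. First I would dispose of the boundary case $|a_j| = 1$ via Rubinstein's argument, which reduces the problem to polynomials having a multiple critical point on $\partial \overline{\mathbb{D}(a_j, 1)}$. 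For the interior case $|a_j| < 1$, I would pass to an extremal polynomial minimizing $\min_{b : p'(b) = 0} |b - a_j|$ over all monic degree-$n$ polynomials with roots in $\overline{\mathbb{D}(0,1)}$ and $a_j$ among them; compactness yields an extremizer whose structure can then be probed by perturbing individual $a_k$ and tracking critical points via the implicit function theorem.

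The main obstacle is precisely where every classical approach has stalled: the middle range where $|a_j|$ is bounded away from both $0$ and $1$. Near $a_j = 0$ the Gauss--Lucas theorem already places every critical point inside $\overline{\mathbb{D}(0,1)} \subseteq \overline{\mathbb{D}(a_j, 1)}$, and near $|a_j| = 1$ Rubinstein's technique applies. In between, the extremal configurations (most $a_k$ accumulating near $\partial \mathbb{D}$ together with the fixed $a_j$) are genuinely delicate, and no elementary local perturbation has been found to control them uniformly in $n$. Tao's resolution for large $n$ relies on quantitative logarithmic-potential estimates and pseudospectral bounds that do not obviously transfer to small or moderate $n$, so I would expect the proposed extremal attack to recover only known cases; a uniform proof likely demands either a new quantity monotone under root perturbations or a nontrivial interpolation between the two tractable extreme regimes.
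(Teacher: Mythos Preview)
Your assessment is correct: the statement is Sendov's conjecture, presented in the paper explicitly as a \emph{conjecture}, not a theorem. The paper offers no proof of it; it merely states the conjecture, surveys the cases and degrees for which it is known (culminating in Tao's result for sufficiently large $n$), and then uses it as motivation for a C*-algebraic analogue. So there is no ``paper's own proof'' to compare against, and your decision to sketch the landscape of partial results rather than claim a proof is exactly the right response.

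Your sketch is accurate and well-informed: the logarithmic-derivative identity, Rubinstein's boundary case, the extremal/compactness reduction, and the identification of the intermediate-$|a_j|$ regime as the genuine obstruction all match the standard literature. Nothing you wrote is wrong, and nothing more can reasonably be expected for a statement that remains open in general.
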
 
   A direct calculation says that Conjecture \ref{SENDOV} holds for degree two polynomials. For $n=3$, Conjecture  \ref{SENDOV}  is proved in \cite{BRANNAN}. For $n=3,4$,  Conjecture \ref{SENDOV}  is proved in \cite{RUBINSTEIN}.   For $n=5$,  Conjecture \ref{SENDOV}  is proved in \cite{MEIRSHARMA}. For $n=6$,  Conjecture \ref{SENDOV}  is proved in \cite{KATSOPRINAKIS, BORCEA1996}. For $n=7$,  Conjecture \ref{SENDOV}  is proved in \cite{BROWN1997}. For $n=8$,  Conjecture \ref{SENDOV}  is proved in \cite{BROWNXIANG}. Conjecture \ref{SENDOV}  is proved for all polynomials after certain (unknown) degree in \cite{TAO}. On the other hand, Conjecture \ref{SENDOV}  is proved for various special cases of polynomials and position of roots (inside the closed unit disc) in \cite{BORCEA, BROWN1991, BROWN, BOJANOV, SCHMEISSER, DEGOT, CHALEBGWA, RUBINSTEIN, GOODMANRAHMANRATTI, MILLER, CHIJIWA, KASMALKAR, JOYAL, VAIJAITUZAHARESCU, MILLER2005, MILLER2008, SAFFTWOMEY, MILLER1990, BORCEA1998, RAHMANTARIQ, GOODMAN, PAWLOWSKI, MILLER1992, CHIJIWA2010, SCHMIEDERSZYNAL, DAEPPGORKINVOSS, BOJANOVRAHMANSZYNAL, BYRNE, BYRNE1997}. Non-Archimedean version of Conjecture \ref{SENDOV} has been solved in  \cite{CHOILEE}. In this paper, we derive Gauss-Lucas theorem for polynomials over certain C*-algebras. We then formulate Sendov's conjecture for polynomials over C*-algebras.

\section{C*-algebraic Gauss-Lucas Theorem and C*-algebraic Sendov's Conjecture}
Let $\mathcal{A}$	be a unital commutative C*-algebra and let $G(\mathcal{A})$ be the set of all invertible elements of $\mathcal{A}$. Let $\mathcal{A}[z]$ be the set of all polynomials over $\mathcal{A}$. We assume $G(\mathcal{A})$ is dense in $\mathcal{A}$. Such C*-algebras exist and a characterization of such C*-algebras is given by Robertson  \cite{ROBERTSON}. We now introduce the following definition.
\begin{definition}\label{CDIFFE}
\textbf{(C*-algebraic differentiation) Let $\mathcal{A}$	be a unital commutative C*-algebra and $G(\mathcal{A})$ be  dense in $\mathcal{A}$. Let $f: \mathcal{A}\to \mathcal{A}$ be a function and $\omega\in \mathcal{A}$. We say that	$f$ is C*-algebraic differentiable at $\omega$ if there exists an $L\in \mathcal{A}$ satisfying the following:  for each $\varepsilon>0$, there exists a $\delta>0$ such that if $z\in \mathcal{A}$ satisfies $\|z-\omega\|<\delta$ and $z-\omega\in G(\mathcal{A})$, then 
\begin{align*}
	\|(z-\omega)^{-1}(f(z)-f(\omega))-L\|<\varepsilon.
\end{align*}}
In this case, we write $f'(\omega)=L$.
\end{definition}
We note that, because $G(\mathcal{A})$ is  dense in $\mathcal{A}$, Definition \ref{CDIFFE} is well-defined. Now we derive C*-algebraic Gauss-Lucas theorem. 
\begin{theorem}
\textbf{(C*-algebraic Gauss-Lucas Theorem)} Let $p(z)=(z-a_1)(z-a_2)\cdots (z-a_n)\in\mathcal{A}[z]$. 	Define 
\begin{align*}
	\mathcal{B}\coloneqq \{z \in \mathcal{A}: z-a_j \text{ is  invertible in }  \mathcal{A} \text{ for all } 1\leq j \leq n\}, \quad 	\mathcal{C}\coloneqq \{z \in \mathcal{A}: p'(z)=0\}.	
\end{align*}
 Then for each $z \in \mathcal{B}\cap  \mathcal{C}$, there are positive $\omega_{z_1}, \dots, \omega_{z_n} \in  \mathcal{A}$ such that 
\begin{align}\label{CONVEX}
	z=\sum_{j=1}^{n}\omega_{z_j}a_j, \quad \sum_{j=1}^{n}\omega_{z_j}=1.
\end{align}
\end{theorem}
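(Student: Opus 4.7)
The plan is to imitate the classical complex-analytic proof of the Gauss-Lucas theorem via the logarithmic derivative, translated into the C*-algebraic language provided by Definition \ref{CDIFFE}. First I would establish a product rule: if $g, h \colon \mathcal{A} \to \mathcal{A}$ are C*-algebraic differentiable at $\omega$, then so is $gh$, with $(gh)'(\omega) = g'(\omega)h(\omega) + g(\omega)h'(\omega)$. This follows from the splitting $g(z)h(z) - g(\omega)h(\omega) = (g(z)-g(\omega))h(\omega) + g(z)(h(z)-h(\omega))$; multiply by $(z-\omega)^{-1}$ (valid since $z - \omega \in G(\mathcal{A})$), let $(z-\omega)^{-1}$ pass through $g(z)-g(\omega)$ using commutativity of $\mathcal{A}$, and note that differentiability of $g$ at $\omega$ forces $g(z)\to g(\omega)$ in norm along the admissible directions. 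A short induction on $n$ then yields
\begin{align*}
p'(z) = \sum_{j=1}^{n} \prod_{k \neq j}(z - a_k).
\end{align*}

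Next, fix $z \in \mathcal{B} \cap \mathcal{C}$. Each $z - a_j$ is invertible and $\mathcal{A}$ is commutative, so $p(z) = \prod_j (z - a_j)$ is invertible and the formula above factors as $p'(z) = p(z) \sum_{j=1}^{n} (z-a_j)^{-1}$. Since $p'(z) = 0$, multiplying by $p(z)^{-1}$ gives the C*-algebraic analogue of the classical identity underlying Gauss-Lucas:
\begin{align*}
\sum_{j=1}^{n} (z - a_j)^{-1} = 0.
\end{align*}

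The key step now is to \emph{rationalize} in C*-algebraic fashion: for any invertible $x \in \mathcal{A}$ one has $x^{-1} = x^* (x^*x)^{-1}$. Setting $b_j := (z-a_j)^*(z-a_j)$, each $b_j$ is positive and invertible, and the identity above becomes $\sum_{j=1}^{n} (z-a_j)^* b_j^{-1} = 0$. Taking adjoints and using commutativity, I rearrange this to
\begin{align*}
z \cdot S = \sum_{j=1}^{n} b_j^{-1} a_j, \qquad S := \sum_{j=1}^{n} b_j^{-1}.
\end{align*}
As a sum of positive invertible elements, $S$ is positive and invertible, so setting $\omega_{z_j} := S^{-1} b_j^{-1}$ gives $z = \sum_{j=1}^{n} \omega_{z_j} a_j$ with $\sum_{j=1}^{n} \omega_{z_j} = S^{-1} S = 1$. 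Each $\omega_{z_j}$ is the product of two commuting positive elements $S^{-1}$ and $b_j^{-1}$, hence positive, which is exactly the required decomposition \eqref{CONVEX}.

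The main obstacle is verifying the product rule cleanly from Definition \ref{CDIFFE}, since the defining limit is only taken along the dense but proper set of directions $z - \omega \in G(\mathcal{A})$; one must confirm that norm-continuity of $g$ at $\omega$ survives this restriction and that the difference quotient identity converges uniformly along admissible directions. Once the product rule is in hand, everything else is a direct algebraic translation of the classical complex-analytic argument, with the role of $1/|z-a_j|^2$ played by the positive invertible element $b_j^{-1}$.
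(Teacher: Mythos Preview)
Your proposal is correct and follows essentially the same route as the paper: obtain the logarithmic-derivative identity $\sum_j (z-a_j)^{-1}=0$ on $\mathcal{B}\cap\mathcal{C}$, rationalize each summand via $x^{-1}=x^*(x^*x)^{-1}$, and normalize by the positive invertible sum $S$ to produce the convex coefficients $\omega_{z_j}=S^{-1}b_j^{-1}$. The only difference is that the paper simply asserts the expanded formula $p'(z)=\sum_j\prod_{k\neq j}(z-a_k)$ without deriving it from Definition~\ref{CDIFFE}, so the product-rule verification you flag as the ``main obstacle'' is bypassed there rather than resolved.
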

\begin{proof}
	We have 
	\begin{align*}
		p'(z)=\sum_{j=1}^{n}(z-a_1)\cdots \widehat{(z-a_j)}\cdots (z-a_n), \quad \forall z \in \mathcal{A}.
	\end{align*}
	where the term with cap is missing. Then 
	\begin{align*}
		p(z)^{-1}	p'(z)=\sum_{j=1}^{n}(z-a_j)^{-1}, \quad \forall z \in \mathcal{B}.
	\end{align*}
We therefore have 
\begin{align}\label{FIRST}
	0=\sum_{j=1}^{n}(z-a_j)^{-1}=\sum_{j=1}^{n}((z-a_j)(z-a_j)^*)^{-1}(z^*-a_j^*), \quad \forall z \in \mathcal{B}\cap  \mathcal{C}.
\end{align}
Now  define 
\begin{align*}
	\omega_z \coloneqq \sum_{j=1}^{n}((z-a_j)(z-a_j)^*)^{-1}, \quad \forall z \in \mathcal{B}\cap  \mathcal{C}.
\end{align*}
Note that whenever $ z \in \mathcal{B}$, each $((z-a_j)(z-a_j)^*)^{-1}$ is positive and invertible, $1\leq j \leq n$. Hence $\omega_z$ is positive and invertible. Equation (\ref{FIRST}) then gives 
\begin{align*}
z=\omega_z^{-1}\sum_{j=1}^{n}((z-a_j)(z-a_j)^*)^{-1}a_j, \quad \forall z \in \mathcal{B}\cap  \mathcal{C}.
\end{align*}
\end{proof}
\begin{definition}\label{CDISC}
	\textbf{Given a unital  C*-algebra $\mathcal{A}$ with identity $1$  and an element $a\in \mathcal{A}$, we define the C*-algebraic closed unit disc centered at $a$ and of radius $r>0$, $r\in \mathbb{R}$, denoted as $\overline{\mathbb{D}^*(a, r)}$  by
	\begin{align*}
	 \overline{\mathbb{D}^*(a, r)}\coloneqq 	\{z\in \mathcal{A}: (z-a)(z-a)^*\leq \sqrt{r}\cdot 1\}.
	\end{align*}}
\end{definition}
With Definition \ref{CDISC} we can formulate Conjecture \ref{SENDOV} for C*-algebras.
\begin{conjecture}\label{CSENDOV} 	\textbf{(Commutative C*-algebraic Sendov's conjecture)
Let $\mathcal{A}$	be a unital commutative C*-algebra and $G(\mathcal{A})$ be  dense in $\mathcal{A}$. 	Let $n \in \mathbb{N}\setminus \{1\}$ and $p(z)=(z-a_1)(z-a_2)\cdots (z-a_n)\in\mathcal{A}[z]$ be such that $a_1, a_2, \dots,  a_n \in \overline{\mathbb{D^*}(0, 1)}$. Assume that $p'$ admits roots in $\mathcal{A}$, say $b_1, b_2, \dots,  b_{n-1} \in \overline{\mathbb{D^*}(0, 1)}$ and each $b_k$ can be written in the form of Equation (\ref{CONVEX}). Then for each $a_j$, $1\leq j\leq n$, there exists  a zero $b$ of $p'$ such that $b \in  \overline{\mathbb{D}^*(a_j, 1)}$.} 	
\end{conjecture}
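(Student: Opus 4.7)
My plan is to reduce Conjecture \ref{CSENDOV} to the classical Sendov conjecture through the Gelfand representation of $\mathcal{A}$, and then to confront a continuous-selection problem which is the genuine additional obstacle in the C*-algebraic setting. I do not expect to resolve the conjecture in full generality for the same reason classical Sendov is itself unresolved, but the plan singles out precisely what must be done beyond the scalar case.

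First I would invoke Gelfand--Naimark to identify $\mathcal{A}$ with $C(X)$ for a compact Hausdorff space $X$: elements of $\mathcal{A}$ become continuous $\mathbb{C}$-valued functions on $X$, the involution is pointwise complex conjugation, positivity is pointwise non-negativity, and the order $\leq$ is pointwise. Under this identification the disc condition $a \in \overline{\mathbb{D}^{*}(0,1)}$ translates to $|a(x)| \leq 1$ for every $x \in X$, so for each fixed $x$ the polynomial $p_{x}(z) \coloneqq \prod_{j=1}^{n}(z - a_{j}(x)) \in \mathbb{C}[z]$ has all its roots in the classical closed unit disc, and the hypothesis that $p'$ admits the lift $b_{1},\dots,b_{n-1}$ in $\mathcal{A}$ with $b_{k} \in \overline{\mathbb{D}^{*}(0,1)}$ makes $\{b_{1}(x),\dots,b_{n-1}(x)\}$ the complete multiset of complex roots of $p'_{x}$ in $\overline{\mathbb{D}(0,1)}$.

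Second, I would fix $j \in \{1,\dots,n\}$ and apply classical Sendov pointwise: for every $x \in X$ there exists $k=k(x)$ with $|b_{k}(x)-a_{j}(x)| \leq 1$. Writing
\begin{equation*}
 X_{j,k} \coloneqq \bigl\{x \in X : (b_{k}-a_{j})(x)\,\overline{(b_{k}-a_{j})(x)} \leq 1\bigr\},
\end{equation*}
this yields the closed covering $X = \bigcup_{k=1}^{n-1} X_{j,k}$. The target conclusion asks for some $b \in \mathcal{A}$ with $p'(b)=0$ and $(b-a_{j})(b-a_{j})^{*} \leq 1$ in $\mathcal{A}$. Here a useful observation is that $b$ need not be one of the listed $b_{k}$: it may be any continuous selection $x \mapsto b(x)$ of roots of $p'_{x}$. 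Thus the remaining task is to produce a function $\sigma \colon X \to \{1,\dots,n-1\}$ such that $b(x) \coloneqq b_{\sigma(x)}(x)$ is continuous and $x \in X_{j,\sigma(x)}$ for every $x$. The natural candidate is to set $\sigma(x)$ so that $b_{\sigma(x)}(x)$ is the root closest to $a_{j}(x)$; its continuity is forced away from the collision locus where two candidate roots are equidistant from $a_{j}(x)$, and at such collisions the corresponding values $b_{k}(x)$ coincide so that the change of label is harmless.

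Third, the main obstacle is precisely this global selection step. It is not implied by pointwise Sendov, has no obvious algebraic content, and admits counterexamples in related set-valued selection problems where algebraic monodromy prevents a continuous global section. To make progress I would attempt, in order: (i) exploit the Gauss--Lucas data of Equation \eqref{CONVEX}, which attaches to each $b_{k}$ a canonical positive weighting of the $a_{j}$'s and could single out an index for which the weight on $a_{j}$ is dominant; (ii) upgrade classical Sendov to a strict inequality off $X_{j,k}$, turning the covering into an open one and using compactness-plus-finiteness to force $X_{j,k}=X$ for some $k$; (iii) handle first the case of totally disconnected $X$, where $\sigma$ can be chosen locally constant on clopen pieces, and then attempt to pass to general $X$ by a density argument. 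I expect the unconditional general-$n$ statement to be out of reach on two counts: taking $X$ a point recovers classical Sendov, which is itself open, and even granted classical Sendov the selection problem is a genuinely new topological question. A realistic intermediate target is the conditional theorem ``if classical Sendov holds in degree $n$ and $X$ is totally disconnected (or the nearest-root to $a_{j}(x)$ is unique for every $x$), then Conjecture \ref{CSENDOV} holds in degree $n$.''
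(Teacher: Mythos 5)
The statement you were asked about is posed in the paper as a \emph{conjecture}: the paper supplies no proof of it (it only verifies the degree-$2$ case of the companion Conjecture \ref{CSENDOV2}, where the selection issue is vacuous because $p'$ has the single root $\frac{a+b}{2}$). So there is no proof of record to compare against, and your decision to present a reduction plus an honest account of the obstructions, rather than a purported proof, is the right call. Your Gelfand-duality reduction is sound: identifying $\mathcal{A}$ with $C(X)$, the condition $(z-a)(z-a)^*\leq 1$ does become $|z(x)-a(x)|\leq 1$ pointwise, taking $X$ a singleton does recover classical Sendov (still open in general degree), and the continuous-selection problem you isolate is genuinely the new content of the C*-algebraic statement. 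Your observation that the conclusion permits $b$ to be any root of $p'$ in $\mathcal{A}$, not necessarily one of the listed $b_k$, is also correct and important.

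Two points in your plan need repair. First, the hypothesis that $p'(b_k)=0$ for $k=1,\dots,n-1$ does not make $\{b_1(x),\dots,b_{n-1}(x)\}$ the complete root multiset of $p'_x$: if two of the $b_k$ coincide at $x$ while $p'_x$ has simple roots, some root of $p'_x$ may be missed, and then pointwise Sendov does not yield the covering $X=\bigcup_k X_{j,k}$ — the nearby critical point promised by the scalar theorem need not be any $b_k(x)$. You would need the stronger hypothesis that $p'(z)=n\prod_{k}(z-b_k)$ factors in $\mathcal{A}[z]$, or else run the selection over all roots of $p'_x$, which destroys the global labelling you rely on. Second, your sub-idea (ii) — upgrading to an open cover and ``using compactness-plus-finiteness to force $X_{j,k}=X$ for some $k$'' — does not work as stated: a finite open cover of a connected compact space need not have one member equal to the whole space (cover $[0,1]$ by $[0,0.6)$ and $(0.4,1]$). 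Your conditional targets (totally disconnected $X$, or uniqueness of the nearest root) remain reasonable, and the degree-$2$ case of your framework reproduces exactly what the paper actually proves.
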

For arbitary C*-algebras, without introducing differentiation we can formulate Conjecture Conjecture \ref{CSENDOV}   as follows. 
\begin{conjecture}\label{CSENDOV2} 
\textbf{(C*-algebraic Sendov's conjecture)
	Let $\mathcal{A}$	be a unital  C*-algebra. 	Let $n \in \mathbb{N}\setminus \{1\}$ and $p(z)=(z-a_1)(z-a_2)\cdots (z-a_n)\in\mathcal{A}[z]$ be such that $a_1, a_2, \dots,  a_n \in \overline{\mathbb{D^*}(0, 1)}$.  Define 
	\begin{align*}
		p'(z)=\sum_{j=1}^{n}(z-a_1)\cdots \widehat{(z-a_j)}\cdots (z-a_n), \quad \forall z \in \mathcal{A}.
	\end{align*}
	where the term with cap is missing. Assume that $p'$ admits roots in $\mathcal{A}$, say $b_1, b_2, \dots,  b_{n-1} \in \overline{\mathbb{D^*}(0, 1)}$ and each $b_k$ can be written in the form of Equation (\ref{CONVEX}). Then for each $a_j$, $1\leq j\leq n$, there exists  a zero $b$ of $p'$ such that $b \in  \overline{\mathbb{D}^*(a_j, 1)}$.} 	
\end{conjecture}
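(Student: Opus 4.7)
The plan is to begin with the only case the abstract actually claims, namely $n=2$, and then describe a reduction strategy for general $n$ together with the obstruction I expect to be decisive.

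For $n=2$, the formal derivative given in Conjecture \ref{CSENDOV2} reads $p'(z)=(z-a_2)+(z-a_1)=2z-(a_1+a_2)$, so its unique root in $\mathcal{A}$ is $b=\tfrac12(a_1+a_2)$, which is manifestly of the convex form \eqref{CONVEX} with $\omega_1=\omega_2=\tfrac12\cdot 1$. To verify $b\in\overline{\mathbb{D}^*(a_j,1)}$ for $j=1,2$ it suffices, by Definition \ref{CDISC}, to show $(b-a_j)(b-a_j)^*\le 1$. A direct computation gives $(b-a_j)(b-a_j)^* = \tfrac14(a_i-a_j)(a_i-a_j)^*$ where $\{i,j\}=\{1,2\}$, so the task reduces to the bound $(a_i-a_j)(a_i-a_j)^*\le 4\cdot 1$. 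This in turn follows from the parallelogram identity
\begin{align*}
(a_i-a_j)(a_i-a_j)^* + (a_i+a_j)(a_i+a_j)^* \;=\; 2\bigl(a_ia_i^* + a_ja_j^*\bigr),
\end{align*}
together with positivity of the second term on the left: one obtains $(a_i-a_j)(a_i-a_j)^*\le 2(a_ia_i^*+a_ja_j^*)\le 4\cdot 1$, using $a_k\in\overline{\mathbb{D}^*(0,1)}$ to conclude $a_ka_k^*\le 1$. Dividing by $4$ closes the $n=2$ case.

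For arbitrary $n$ in the commutative setting of Conjecture \ref{CSENDOV}, my plan would be to invoke Gelfand--Naimark and identify $\mathcal{A}\cong C(X)$ with $X$ compact Hausdorff; evaluation at each character then turns $p$ into a scalar polynomial $p_x$ whose roots $a_j(x)$ lie in the closed complex unit disc, and one would apply the classical Sendov conjecture pointwise. The hypothesis on roots provides continuous sections $b_1,\dots,b_{n-1}\in\mathcal{A}$ of the corresponding scalar critical points.

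The principal obstacle I expect is the continuous-selection step: for a fixed $a_j$ one knows that at every $x\in X$ \emph{some} root $b_{k(x)}(x)$ of $p'_x$ witnesses the one-disc estimate, but extracting a single index $k\in\{1,\dots,n-1\}$ (equivalently, a single $b_k\in\mathcal{A}$) that works uniformly in $x$ is a genuinely topological matter and can fail once the fibre permutations of the critical points become nontrivial on loops in $X$. For genuinely non-commutative $\mathcal{A}$ in Conjecture \ref{CSENDOV2} no Gelfand picture is available and I would expect this route to break down entirely, so a different technique would be required; moreover the classical Sendov conjecture itself remains open beyond low degree, which is a hard prerequisite for any pointwise-reduction approach.
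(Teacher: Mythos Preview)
The statement you are asked to prove is a \emph{conjecture} that the paper does not attempt to prove in general; the only proof in the paper is a separate theorem immediately after it, which establishes the case $n=2$. Your $n=2$ argument is essentially identical to the paper's: compute the unique root $b=\tfrac12(a_1+a_2)$ of $p'$, and bound $(b-a_j)(b-a_j)^*=\tfrac14(a_1-a_2)(a_1-a_2)^*$ via the parallelogram identity $(a_1-a_2)(a_1-a_2)^*+(a_1+a_2)(a_1+a_2)^*=2(a_1a_1^*+a_2a_2^*)$ together with $a_ka_k^*\le 1$. The paper states the same inequality $2(a_1a_1^*+a_2a_2^*)\ge (a_1-a_2)(a_1-a_2)^*$ directly and draws the same conclusion.

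Your discussion of general $n$ (Gelfand reduction in the commutative case, the continuous-selection obstruction, and the dependence on the unresolved classical conjecture) goes well beyond anything the paper attempts; the paper simply records the conjecture and stops at $n=2$. So there is nothing in the paper to compare that part against, and your honest identification of the obstructions is appropriate: the pointwise classical Sendov input is itself open in high degree, and even granting it, promoting a pointwise witness to a single global $b_k\in\mathcal{A}$ is exactly the step where the argument would fail without further hypotheses.
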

We end with the following result.
\begin{theorem}
Let $\mathcal{A}$	be a unital  C*-algebra. Conjecture \ref{CSENDOV2}   holds for polynomials  of degree 2 of the form $p(z)=(z-a)(z-b)\in \mathcal{A}[z]$, where $a, b \in \overline{\mathbb{D}^*(0,1)}$. 
\end{theorem}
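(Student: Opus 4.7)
The plan is to write down $p'$ explicitly from the formula in Conjecture~\ref{CSENDOV2}, solve for its unique root in $\mathcal{A}$, and then verify by direct computation that this root lies in both $\overline{\mathbb{D}^*(a,1)}$ and $\overline{\mathbb{D}^*(b,1)}$. For $n=2$ the prescribed formula collapses to $p'(z) = (z-b) + (z-a) = 2z - (a+b)$, so the unique zero of $p'$ in $\mathcal{A}$ is $b_1 := \tfrac{1}{2}(a+b)$. Note that $b_1$ automatically satisfies $\|b_1\| \leq 1$, hence $b_1 \in \overline{\mathbb{D}^*(0,1)}$, and admits the representation in the form of Equation~\eqref{CONVEX} with both weights equal to $\tfrac{1}{2}\cdot 1$, so the standing hypotheses of Conjecture~\ref{CSENDOV2} are trivially consistent.

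The remaining task is to bound $(b_1 - a)(b_1 - a)^*$ above by the unit of $\mathcal{A}$. The identity $b_1 - a = \tfrac{1}{2}(b-a)$ yields $(b_1 - a)(b_1 - a)^* = \tfrac{1}{4}(b-a)(b-a)^*$. The hypothesis $a, b \in \overline{\mathbb{D}^*(0,1)}$ reads $aa^* \leq 1$ and $bb^* \leq 1$, which, via the standard equivalence $xx^* \leq 1 \Leftrightarrow \|x\| \leq 1$ valid in any C*-algebra, gives $\|a\|, \|b\| \leq 1$ and hence $\|b-a\| \leq 2$ by the triangle inequality. Since $(b-a)(b-a)^*$ is a positive element of norm $\|b-a\|^2 \leq 4$, it satisfies $(b-a)(b-a)^* \leq 4\cdot 1$, and therefore $(b_1-a)(b_1-a)^* \leq 1$, placing $b_1$ in $\overline{\mathbb{D}^*(a,1)}$. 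The identical calculation with the roles of $a$ and $b$ reversed (using $b_1 - b = -\tfrac{1}{2}(b-a)$ together with $(a-b)(a-b)^* = (b-a)(b-a)^*$) places $b_1$ in $\overline{\mathbb{D}^*(b,1)}$.

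No substantive obstacle is expected. The argument is essentially mechanical once the explicit form of $p'$ is in hand (which eliminates any noncommutativity issue since only one root appears) and once one invokes the C*-identity $\|xx^*\| = \|x\|^2$ to convert the norm bound $\|b-a\| \leq 2$ into the operator inequality required by Definition~\ref{CDISC}. That translation between a scalar norm estimate and a positive-element order inequality is the only step meriting care, and it is entirely standard.
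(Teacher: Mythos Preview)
Your proof is correct and follows the same overall outline as the paper's: compute $p'(z)=2z-(a+b)$, identify the unique zero $b_1=\tfrac12(a+b)$, note it has the convex form~\eqref{CONVEX}, and then verify $(b_1-a)(b_1-a)^*\le 1$ (and symmetrically for $b$).

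The one genuine difference lies in how the bound $(b-a)(b-a)^*\le 4\cdot 1$ is obtained. You pass through norms: from $aa^*\le 1$ and $bb^*\le 1$ deduce $\|a\|,\|b\|\le 1$, then $\|b-a\|\le 2$, and finally convert back via the C*-identity $\|(b-a)(b-a)^*\|=\|b-a\|^2$. The paper stays entirely in the operator order, using the parallelogram-type inequality
\[
(a-b)(a-b)^* \;\le\; 2(aa^*+bb^*)\;\le\;4\cdot 1,
\]
which follows from $2(aa^*+bb^*)-(a-b)(a-b)^*=(a+b)(a+b)^*\ge 0$. Your route is perhaps more elementary in that it invokes only the triangle inequality and the C*-norm identity; the paper's route has the minor aesthetic advantage of never leaving the positive-element ordering and of being slightly sharper (it bounds $(a-b)(a-b)^*$ by $2(aa^*+bb^*)$ rather than merely by $4\cdot 1$). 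Either argument settles the claim.
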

\begin{proof}
We have 
\begin{align*}
	p'(z)=2z-(a+b), \quad \forall z \in \mathcal{A}.
\end{align*}
Therefore the zero of $p'$ is $\frac{a+b}{2}$ which is in the form of Equation (\ref{CONVEX}). Now we have
\begin{align*}
	2(aa^*+bb^*)\geq (a+b)(a+b)^*, \quad 2(aa^*+bb^*)\geq (a-b)(a-b)^* 
\end{align*}
which gives
\begin{align*}
\left(\frac{a+b}{2}-a\right)\left(\frac{a+b}{2}-a\right)^*\leq 1, \quad \left(\frac{a+b}{2}-b\right)\left(\frac{a+b}{2}-b\right)^*\leq 1.	
\end{align*}
Therefore 
\begin{align*}
\frac{a+b}{2}\in \overline{\mathbb{D}^*\left(a, 1\right)}, \quad \frac{a+b}{2}\in \overline{\mathbb{D}^*(b, 1)}	
\end{align*}
which completes the proof. 
\end{proof}

 \bibliographystyle{plain}
 \bibliography{reference.bib}

\end{document}